\newtheorem{thm}{Theorem}
\newtheorem{cor}[thm]{Corollary}
\newtheorem{lem}[thm]{Lemma}
 \newtheorem{lemma}[thm]{Lemma}
\theoremstyle{definition}
\newtheorem{defn}[thm]{Definition}
\def \no#1#2#3 {{\bf #1} (#3), #2.}
\def \eds#1#2#3 {#1, #2, #3.}
\def\d{{\rm d}}
\def\N{{\mathbb N}}
\def\:{{\colon}}
\def\be#1{\begin{equation}\label{#1}}
\def\ee{\end{equation}}
\def\<{\langle}
\def\>{\rangle}
\def\coloneqq{:=}
\newcommand{\na}{\nabla}
\renewcommand{\P}{\mathbb{P}}
\newcommand{\lec}{\lesssim}
\newcommand{\bs}{\begin{split}}
\newcommand{\essss}{\end{split}}
\newcommand{\M}{\mathcal{M}}
\renewcommand{\div}{\operatorname{div}}
\newcommand{\eqnb}{\begin{equation}}
\newcommand{\eqne}{\end{equation}}
\renewcommand{\ee}{\mathrm{e}}
\newcommand{\p}{\partial}
\newcommand{\cR}{\mathcal{R}}
\newcommand{\cC}{D}
\newcommand{\RR}{\mathbb{R}}
\newcommand{\PP}{\mathbb{P}}
\renewcommand{\d}{\mathrm{d}}
\newcommand\blfootnote[1]{%
  \begingroup
  \renewcommand\thefootnote{}\footnote{#1}%
  \addtocounter{footnote}{-1}%
  \endgroup
}
\begin{document}
\title{Quantitative transfer of regularity of the incompressible Navier-Stokes equations from $\RR^3$ to the case of a bounded domain}
\author{W. S. O\.za\'nski}
\date{\vspace{-5ex}}
\maketitle
\blfootnote{Department of Mathematics, University of Southern California, Los Angeles, CA 90089, USA, and Institute of Mathematics, Polish Academy of Sciences, Warsaw, 00-656, Poland; email: ozanski@usc.edu }

\begin{abstract}
Let $u_0\in C_0^5 ( B_{R_0})$ be divergence-free and suppose that $u$ is a strong solution of the three-dimensional incompressible Navier-Stokes equations on $[0,T]$ in the whole space $\RR^3$ such that $\| u \|_{L^\infty ((0,T);H^5 (\RR^3 ))} + \| u \|_{L^\infty ((0,T);W^{5,\infty }(\RR^3 ))}  \leq M <\infty$. We show that then there exists a unique strong solution $w$ to the problem posed on $B_R$ with the homogeneous Dirichlet boundary conditions, with the same initial data and on the same time interval for $R\geq \max(1+R_0, C(a)  C(M)^{1/a} \exp ({CM^4T/a}) )$ for any $a\in [0,3/2)$, and we give quantitative estimates on $u-w$ and the corresponding pressure functions.
\end{abstract}


\section{Introduction}
We are concerned with the three-dimensional incompressible Navier-Stokes equations 
\eqnb\label{NSE_intro}
\begin{split}
\p_t u - \nu \Delta u + (u\cdot \nabla ) u + \nabla \pi &=0,\\
\div u &=0
\end{split}
\eqne
in $\RR^3$, where $\nu>0$ is the viscosity coefficient, $u$ is the velocity of a fluid, and $p$ is the pressure function. The equations are equipped with an initial condition $u(0)=u_0$. The study of the equations goes back to the work of Leray \cite{leray_34} and Hopf \cite{hopf_1951}, who showed the global-in-time existence of weak solutions in the case of $\RR^3$ (Leray) and the case of a bounded, smooth domain $\Omega \subset \RR^3$ (Hopf). These are usually referred to as \emph{Leray-Hopf weak solutions}. We refer the reader to the recent comprehensive review article \cite{op} of Leray's work and to \cite{NSE_book} for a general background of the mathematical theory of the Navier-Stokes equations \eqref{NSE_intro}.
We note that the fundamental question of global-in-time existence of strong solutions remains open in each of these settings. 

Heywood \cite{heywood_88} was the first to study the connections between the Navier-Stokes equations posed on different domains, and he showed that a Leray-Hopf weak solution on $\RR^3$ can be obtained as a limit of weak solutions on $B_R$. In the two-dimensional case Kelliher~\cite{kelliher_08} proved that weak solutions on large domains converge strongly, in the energy space, to a weak solution on $\RR^2$, provided the latter exists on the same time interval. Some connections regarding existence of smooth solutions on in various setting were explored from a different point of view by Tao \cite{tao_2013}. 

A more direct link regarding well-posedness question between the case of the whole space and the torus has been recently shown by Robinson \cite{robinson_torus}, who used a compactness method to show that for a localized initial data, given the solution remains strong in $\RR^3$ until time $T$, the same is true in the case of sufficiently large periodic torus. This is the problem of ``transfer of regularity'' that we are concerned with in this note. It is closely related to numerical analysis of problems of fluid mechanics (see \cite{kerr_2018}, for example), where often an infinite domain must be approximated by a bounded domain.

Some related problems of transfer of regularity have been studied in the context of vanishing viscosity limits, where the existence of smooth solution to the Euler equations implies the existence of a smooth solution to the Navier-Stokes equations with a sufficiently small viscosity \cite{constantin_1986}. Another related phenomenon is that sufficiently smooth solution of the Navier-Stokes equations on a time interval $[0,T]$ gives similar regularity of some numerical schemes \cite{CCRT_07}. Some other works \cite{raugel_sell_93,gallagher_97} deduce regularity of 3D flows that are, in some sense, ``sufficiently two-dimensional.'' We refer the reader to \cite{robinson_torus} for a further discussion. We note that it can be verified that the considerations of Robinson \cite{robinson_torus} translate to the case of a bounded, smooth domain $\Omega \subset \RR^3$. This gives a transfer of regularity from $\RR^3$ to a sufficiently large $\Omega $. However, due to the use of the compactness method, it is not clear from this approach how large the approximating domain would need to be. We address this issue here.  

Our main theorem gives the first quantitative result regarding the size of the domain $\Omega$ on which the problem \eqref{NSE_intro}, equipped with the homogeneous Dirichlet boundary conditions, has a unique strong solution on the same time interval and with the same initial data.   

\begin{thm}[Main result]\label{thm_main}
Let $R_0>0$ and $M\geq 1$, $a\in [0,3/2)$ and assume that $u_0 \in C_0^5 (B_{R_0})$ is divergence free. Suppose that $(u,\pi)$ is a strong solution of \eqref{NSE_intro} with the initial condition $u(0)=u_0$ on $\RR^3$ for $t\in [0,T]$, such that 
\[ \|  u (t) \|_{H^5}+ \| u (t) \|_{W^{5,\infty}} \leq  M\]  for all $t\in [0,T]$. Then for every $R\geq R_0+1$ such that
\eqnb\label{how_large_R}
 R\gtrsim_{a,M,u_0 }   \ee^{C M^4 T/a },
\eqne 
where $a\in [0,3/2)$, there exists a unique strong solution $(w,\tilde{\pi })$ to the problem \eqref{NSE_intro} posed on $B_R$ with the homogeneous boundary condition, $ \left. u\right|_{\partial B_R} =0$, and the same initial condition $w(0) = u_0$ (see Definition~\ref{def_strong_sol} below), where $C>1$ is a universal constant. Moreover
\eqnb\label{main_ests}
\begin{split}
\| \nabla (u -w ) (t) \|_{L^2(B_R)} &\lec_{a,M,u_0 } \ee^{CM^4t} R^{-a} \\
 \text{ and }\quad \| \nabla (\pi - \tilde{\pi } ) \|_{L^p((0,t);L^2(B_R))} &\lec_{a,M,u_0 }  t^{\frac1p}  \ee^{CM^4t} R^{-a} 
 \end{split}
\eqne
for all $t\in [0,T]$, $a\in [0,3/2)$ and $p\in (1,\infty )$.
\end{thm}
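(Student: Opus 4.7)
The strategy is to compare the whole-space solution $u$ with the bounded-domain solution $w$ on $B_R$ via a divergence-free truncation of $u$, closing the argument by a continuation bootstrap driven by quantitative spatial decay of $u$. Let $T^\star \leq T$ denote the maximal time on which a strong solution $w$ exists with $\|\na (u-w)(t)\|_{L^2(B_R)} \leq 1$; the aim is to show that under \eqref{how_large_R} the comparison estimate forces strict inequality throughout $[0,T]$, so that $\|\na w\|_{L^2} \leq M+1$ and the standard Navier-Stokes continuation criterion extends $w$ up to time $T$.

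\emph{Step 1 (spatial decay of $u$).} Since $u_0 \in C_0^5 (B_{R_0})$, Duhamel's formula $u(t)=\ee^{t\Delta }u_0 + \int_0^t \ee^{(t-s)\Delta } \P \div (u\otimes u)\,\d s$ shows that the linear part has Gaussian decay outside $B_{R_0+1}$, while the nonlinear part inherits at most $|u(x,t)|\sim |x|^{-3}$ decay from the $|x|^{-3}$ kernel of the Leray projector $\P$ on $\R^3$. The borderline $L^2$-tail of order $R^{-3/2}$ is precisely the threshold of the range $a\in [0,3/2)$ in the theorem. The key quantitative bound is
\[
 \|u(t)\|_{L^2(B_R^c)}^2 + \nu \int_0^t \|\na u\|_{L^2(B_R^c)}^2 \,\d s \;\lec_{a,M,u_0}\; \ee^{CM^4 t}\, R^{-2a}
\]
for each $a \in [0,3/2)$, which I would establish via a weighted energy estimate with weight $\langle x\rangle^{2a}$ (the pressure term handled through weighted Calder\'on-Zygmund bounds, which are valid precisely for $|a|<3/2$), or equivalently by iterating the Duhamel formula; the implicit constant must degenerate as $a\to 3/2$. \emph{This is the principal obstacle}, since the rate must saturate the nonlocal limit imposed by the pressure, and only the bulk bound $\|u\|_{H^5\cap W^{5,\infty}} \leq M$ is available as input.

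\emph{Step 2 (truncation and comparison).} Let $\chi \in C^\infty_c(B_R)$ be radial with $\chi \equiv 1$ on $B_{R-1}$ and $|\na \chi |\lec 1$. The compatibility condition $\int_{B_R \setminus B_{R-1}} u\cdot \na \chi \,\d x = 0$ follows from $\div u =0$, so the Bogovski operator on the annulus produces $b$ with $\div b = u\cdot \na \chi$, vanishing trace, and $\|b\|_{H^k(B_R)} \lec \|u\|_{H^{k-1}(B_R\setminus B_{R-1})}$. Setting $v:= \chi u - b$ yields a divergence-free field with $v|_{\p B_R}=0$, $v(0)=u_0$, and $\|u-v\|_{H^1(B_R)} \lec_{a,M,u_0} \ee^{CM^4 t} R^{-a}$ by Step 1. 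Substituting $v$ into \eqref{NSE_intro} gives a forcing $F_R := \p_t v - \nu \Delta v + (v\cdot \na ) v + \na \pi_v$ supported in the annulus, with the same $L^2$-smallness. Local-in-time existence of a strong $w$ on $B_R$ is standard, and $z := w - v$ satisfies
\[
 \p_t z - \nu \Delta z + (w\cdot \na ) z + (z\cdot \na ) v + \na (\tilde \pi - \pi_v ) = -F_R, \qquad z|_{t=0}=0,\ z|_{\p B_R}=0.
\]
Testing against $-\Delta z$ and using $\|v\|_{W^{1,\infty }} \lec M$, Gronwall delivers $\|\na z(t)\|_{L^2(B_R)}^2 \lec_{a,M,u_0} \ee^{CM^4 t} R^{-2a}$, which combined with the triangle inequality yields \eqref{main_ests} and closes the bootstrap precisely when \eqref{how_large_R} holds. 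The pressure estimate is recovered by reading $\na (\tilde \pi - \pi )$ off the difference equation and invoking maximal $L^p_tL^2_x$ regularity.
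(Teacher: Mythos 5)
There is a genuine gap, and it sits exactly where you flag ``the principal obstacle'': Step~1 is not proved, and even as stated it is too weak for Step~2. Your claimed decay bound controls only $\|u\|_{L^2(B_R^c)}$ pointwise in time and $\|\na u\|_{L^2(B_R^c)}$ \emph{time-integrated}, but the forcing $F_R$ you build in Step~2 requires pointwise-in-time $L^2$-smallness, on the annulus, of much more: the commutator terms contain $\pi\na\chi$ and $\na u\cdot\na\chi$, and the Bogovski\u{\i} part contains $\p_t b$ and $\De b$, so you need quantitative annulus decay of $D^2u$, $\p_t u$ (hence, via the equation, of $\na\pi$) and of the pressure $\pi$ itself, all with constants depending only on $a,M,u_0$ and uniformly on $[0,T]$. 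None of this follows from the displayed estimate, and the sketch offered for it (Duhamel with the $|x|^{-3}$ Leray kernel, or a weighted energy estimate for $u$ with weight $\langle x\rangle^{2a}$) is precisely the hard analytic content of the theorem, not a routine step: a weighted energy estimate at the level of $u$ alone does not produce weighted bounds on $D^2u$, $\p_t u$, $\na\pi$, and iterating Duhamel with only $\|u\|_{H^5\cap W^{5,\infty}}\leq M$ as input is not carried out. Tellingly, your argument never uses the order-$5$ hypothesis, whereas in the paper this is exactly what the decay machinery consumes: one works with the vorticity equation, which is quasi-local once $\|u\|_{W^{l+1,\infty}}$ is bounded, propagates the weighted norms $\||x|^aD^l\omega\|_2$ ($l\le 4$) by Gronwall from the compactly supported $\omega_0$ (this is \eqref{bound_on_Gl}), transfers them to weighted $L^p$ bounds on $D^l u$, $l\le 2$, through the Fourier-multiplier Lemma~\ref{lem_fourier} (which costs three extra derivatives of $\omega$, hence $H^5\cap W^{5,\infty}$), and then controls $\pi$, $\na\pi$, $\p_t u$ via the weighted pressure estimates \eqref{pressure_decay1_from_igor}--\eqref{pressure_decay_from_igor}; this yields \eqref{to_show_decay1}--\eqref{decay_of_F2}, which are what your $\|F_R\|_{L^\infty_tL^2}$ smallness actually needs.

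Step~2, by contrast, follows essentially the paper's skeleton (cutoff plus Bogovski\u{\i} correction, perturbed Navier--Stokes system with small forcing), with two remarks. Your bootstrap/continuation argument with threshold $\|\na(u-w)\|_{L^2}\le 1$ is an acceptable substitute for the paper's Galerkin small-forcing Lemma~\ref{lem_solving_nse_with_forcing}, provided you test with the Stokes operator $-Az$ rather than $-\De z$ (the pressure does not vanish against $-\De z$ under Dirichlet conditions) and track how the cubic term $\|\na z\|^6$ is absorbed under the bootstrap hypothesis, which is where \eqref{how_large_R} enters. And the annulus $B_R\setminus B_{R-1}$ is not star-shaped, so ``the Bogovski operator on the annulus'' needs the standard decomposition argument; the paper does this cheaply by a partition of unity inside the divergence, $\div(\psi_l\phi u)$, which your writeup glosses over but which is fixable. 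The unproved decay estimates of Step~1, however, are a missing core of the argument, not a detail.
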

In the above theorem and below we use subscripts to articulate any dependencies of implicit constants. For example, the symbol ``$\lec_{a,M,u_0 }$''  denotes ``$\leq C(a,M,u_0)$'' for some implicit constant $C(a,M,u_0)>0$ dependent on $a,M,u_0$ only. 

In fact, in the particular case of \eqref{how_large_R} the constant can be made more precise,
\[
C(a,M,u_0) \coloneqq C(a) \left( \frac{\cC^2 }{M^3} \right)^{-\frac1a}
\]
where 
\eqnb\label{def_of_Cl}
\cC \coloneqq \ee^{C MT} (4R_0^{2a+8} \| u_0 \|^2_{H^{5}} + M).
\eqne
Similarly the implicit constants in \eqref{main_ests} can be taken of the form $C(a,M,u_0) = C(a) M \cC^2$. These quantified constants are clear from the proof below.

We note that the assumptions of the above theorem hold if $u \in L^\infty ((0,T);L^\infty )$ or $\nabla u \in L^\infty ((0,T); H^1)$ (as shown by Leray \cite{leray_34}, see also \cite[Section~6.3]{op}). In fact in that case $M \leq C (\| u \|_{L^\infty ((0,T);L^\infty )}, \| u_0 \|_{H^5 },\| u_0 \|_{W^{5,\infty } })$, which can be shown by an iteration (with respect to the order of the derivatives) of Gronwall inequalities (see \cite{constantin_foias} or \cite[Theorem~7.1]{NSE_book} for details). There are a number of well-known sufficient conditions that guarantee that a given Leray weak solution is in fact strong \cite{chae_wolf,cf_vorticity,ko_d3u,ladyzhenskaya, np1,prodi,NSE_book,serrin,skalak,wwz}. One of them is the Ladyzhenskaya-Prodi-Serrin \cite{ladyzhenskaya,prodi,serrin} condition, $u \in L^p ((0,T);L^q (\RR^3 ))$, where $p\in [2,\infty )$, $q\in (3,\infty ]$ are such that
\[
\frac2p+\frac3q =1.
\]
Then in fact
\[
\| \nabla u \|_{L^\infty ((0,T);L^2)}^2 \leq \|\nabla u_0 \|_2^2 \exp \left( \| u \|_{L^p ((0,T);L^q (\RR^3))}^p\right),
\]
see \cite[p.~173]{NSE_book}, for example.

The above theorem is also valid with $B_R$ replaced by any bounded and smooth domain of the form $R\Omega $, where $B_1\subset \Omega $. 

Since Theorem~\ref{thm_main} can be proved using a straightforward procedure, we present it now, including some quantitative bounds that will be verified in detail in Sections~\ref{sec_spatial_decay}--\ref{sec_NSE_with_forcing} below. We use the notation $\| \cdot \|_p \equiv \| \cdot \|_{L^p (\RR^3)}$ for brevity.

\begin{proof}[Proof of Theorem~\ref{thm_main}]
 Let $\phi \in C_0^\infty (\RR^3 ; [0,1])$ be such that $\phi =1$ on $B_{R-1}$, and $\phi=0$ outside $B_{R}$. Then $u\phi$ is a solution to the problem
\[
\begin{split}
\p_t (u\phi ) - \Delta (u\phi ) + \na (\phi \pi ) + ((u\phi ) \cdot \na ) (u\phi ) & = F_1    \\
\left. (u\phi ) \right|_{\partial B_R} &= 0,\\
\div \, (u\phi) &= \na \phi \cdot u,\\
(u\phi) (0) &= u_0
\end{split}
\]
in $\RR^3 \times (0,T)$,  where
\[
F_1 \coloneqq - \phi(1-\phi ) (u\cdot \na )u + (\phi u \cdot \na \phi ) u +  \pi \na \phi  - u \Delta \phi  - 2 \na u \cdot \na \phi.
\]
In Section~\ref{sec_spatial_decay} we study the spatial decay of strong solutions of \eqref{NSE_intro} to deduce that at each time $t\in [0,T]$
\eqnb\label{to_show_decay1}
\|\nabla u  - \nabla (u\phi ) \|_2 \lec_a \cC R^{-a} \qquad \text{ and } \qquad \| \nabla ((1-\phi ) \pi ) \|_2\lec_a \cC^2 R^{-a}
\eqne
and
\eqnb\label{decay_of_F1}
\| F_1  \|_2 \lec_a \cC^2 R^{-a },
\eqne
where $a \in [0,3/2)$.

Since $\div (u\phi ) \ne 0$, we can use a Bogovski\u{\i}-type correction $u_c$ (see Section~\ref{sec_bogovskii}) such that 
\eqnb\label{bound_on_nabla_uc}
\| \nabla u_c \|_2 + \| u_c \|_\infty \lec_a \cC R^{-a},
\eqne
\eqnb\label{decay_of_F2}
\hspace{1.5cm}\| F_2  \|_2 \lec_a \cC^2 R^{-a }
\eqne
for all $t\in [0,T]$ and $a \in [0,3/2)$, where
\[
F_2 \coloneqq (u_c \cdot \na ) (u\phi ) + (u\phi \cdot \na )u_c + (u_c \cdot \na ) u_c - \Delta u_c +\p_t u_c,
\]
and such that the ``corrected velocity field''
\eqnb\label{def_of_r}
r\coloneqq u\phi + u_c
\eqne
is divergence free and vanishes on $\p B_R$.

 Letting
\[
F\coloneqq F_1+F_2,
\]
we deduce from \eqref{decay_of_F1}, \eqref{decay_of_F2} (in Section~\ref{sec_NSE_with_forcing}) that for $R$ satisfying  \eqref{how_large_R} the problem
\eqnb\label{NSE_general_form_intro}
\begin{split}
\p_t v - \Delta v + (v\cdot \nabla ) v + \nabla \overline{ \pi } &=F- (v\cdot \nabla )r - (r\cdot \nabla ) v \qquad \text{ in } B_R \times (0,T), \\
\div v &=0 \qquad \text{ in } B_R \times [0,T],\\
\left. v(t) \right|_{\p B_R} &= 0 \qquad \text{ for } t\in [0,T]
\end{split}
\eqne
has a unique strong solution $v$ on $[0,T]$ with $v(0)=0$ (see Definition~\ref{def_strong_sol} and Lemma~\ref{lem_solving_nse_with_forcing}), such that
\eqnb\label{smallness_of_nabla_v}
\| \nabla v(t) \|_{L^2 (B_R)} \lec_a \frac{\cC^{2} R^{-a}  }{M^{2}}  \ee^{CM^{4} t} \quad \text{ and }\quad \| \nabla \overline{\pi } \|_{L^p((0,t);L^2(B_R))} \lec_{a,p} t^\frac1p M \cC^2 R^{-a} \ee^{CM_2^4 t } ,
\eqne
for every $t\in [0,T]$, $p\in (1,\infty )$, $a\in [0,3/2)$, where $C>1$ is an absolute constant. It follows that then 
\[
w\coloneqq r + v,\qquad \tilde{\pi} \coloneqq \pi \phi + \overline{\pi}
\]
satisfies the claim of Theorem~\ref{thm_main} (i.e., satisfies \eqref{NSE_general_form_intro} with vanishing right-hand side and initial condition $w(0)=u_0$). The estimates \eqref{main_ests} follow directly from \eqref{to_show_decay1} and \eqref{smallness_of_nabla_v}. \end{proof}

The most difficult part of the above procedure are the decay estimates \eqref{to_show_decay1},\eqref{decay_of_F1}, for which we employ the machinery developed by Kukavica and Torres \cite{kt_06,kt_07}. Inspired by \cite{kt_07} we consider the vorticity equation (on the whole space $\RR^3$), and we observe that, given spatial derivatives of $u$ are bounded, all spatial derivatives of vorticity $\omega $ are well-localized. This allows us to control $\| |x|^a D^l \omega \|_2$ for all $l$ and $a$ (see \eqref{bound_on_Gl}). In order to obtain control in other $L^p$ spaces of $|x|^a D^lu$ we need to apply a Fourier method (see Lemma~\ref{lem_fourier} and \eqref{lp_decay_of_derivatives}). This also results in the restriction $p\in [2,\infty)$ and $a\in [0,3/p'+l)$, where $p' $ denotes the dual exponent of $p$, and $l\geq 1$ denotes the order of the spatial derivatives of $u$.  Using the Caffarelli-Kohn-Nirenberg inequality (see \eqref{ckn_ineq} below), we can also handle the case $l=0$ (see \eqref{lp_decay_of_derivatives}). Using weighted pressure inequalities \eqref{pressure_decay1_from_igor} and \eqref{pressure_decay_from_igor}, we can then obtain \eqref{decay_of_F1}, see Section~\ref{sec_spatial_decay}. The reason for the restriction $a\in [0,3/2)$ comes from the lowest order terms $\pi \nabla \phi $ and $u\Delta \phi$ appearing in $F_1$, as then taking $p\coloneqq 2$ gives $a\in [0,3/2)$.

The main feature of the Bogovski\u{\i} correction $u_c$ is that it is supported on the set $B_R \setminus B_{R-1}$, which is not star-shaped. Such extensions of the Bogovski\u{\i} lemma are well-known (see~\cite{borchers_sohr} or \cite[Section~III.3]{galdi_book}, for example). In our case the divergence structure of $\mathrm{div}\, (u\phi ) = \nabla \phi \cdot u$ make such extension easier by using a partition of identity to decompose $\phi$ into a number of cutoff functions supported on star-shaped domains. We discuss this and show the resulting estimates \eqref{bound_on_nabla_uc} and \eqref{decay_of_F2} in Section~\ref{sec_bogovskii}.

Finally, the well-posedness of the system \eqref{NSE_general_form_intro} can be verified using classical arguments. We discuss it in Section~\ref{sec_NSE_with_forcing} in order to expose the required smallness of $F$ and to verify  \eqref{smallness_of_nabla_v}. 

We note that the required order $5$ of derivatives that must be under control in the assumptions in Theorem~\ref{thm_main} come from the fact that the highest order derivative of $u$ whose spatial decay must be under control is $2$ (see term ``$\Delta u_c$'' in $F_2$). Due to our  Fourier method in Lemma~\ref{lem_fourier}, this translates into decay estimate on $D^4 \omega$ (see \eqref{lp_decay_of_derivatives}), which in turn requires boundedness of $\| u \|_{H^5}+\| u \|_{W^{5,\infty }}$ (see \eqref{bound_on_Gl}).

We also note that the proof of Theorem~\ref{thm_main} can also be performed with the cutoff function $\phi$ replaced by a cutoff with a different transition rate, such as $\phi_1 \in C_0^\infty ( B_{R_2} ; [0,1])$ with $\phi_1=1$ on $B_{R_1}$, where $R_2>0$, $R_1\in (0, R_2)$. In such case some additional factors of $(R_2-R_1)^{-1}$ can be obtained from the terms involving derivatives of $\phi_1$. Thus the main estimate \eqref{how_large_R} would become weaker as $|R_2-R_1|\to 0$. On the other hand it would not improve as $|R_2 - R_1|\to \infty$ in the sense that we would still need $R_1$ bounded below as in \eqref{how_large_R}. For example, we use the inequality $|x|>R_1$ whenever $(1-\phi_1)\ne 0$ (e.g. in \eqref{ex_of_optimality} below), which would make the decay estimates \eqref{to_show_decay1}-\eqref{decay_of_F2} at least of order $R_1^{-a}$. \\

It will become clear from the proof (see \eqref{bound_on_Gl}) that one can replace \eqref{def_of_Cl} with 
\eqnb\label{def_of_cC}
\cC \coloneqq \ee^{CMT} ( A+ M),
\eqne
where $C>1$ is a universal constant, and $A>1$ is such that 
\eqnb\label{assump_on_A_B} 
\| |x|^{a+4} D^l \omega_0  \|^2_2 \leq A
\eqne
for $l=0,\ldots ,4$, where $\omega_0 \coloneqq \mathrm{curl}\, u_0$ denotes the initial vorticity (see \eqref{bound_on_Gl} below).

Thus the requirements $u_0\in C_0^\infty (B_{R_0})$ and $R\geq R_0 +1$ are not necessary in Theorem~\ref{thm_main}, and we obtain the following.
\begin{cor}[Non-compactly supported $u_0$]\label{cor_main}
Let $a\in [0,3/2)$. Suppose that $u_0 \in H^5 (\RR^3 ) \cap W^{5,\infty } (\RR^3)$ is such that \eqref{assump_on_A_B} holds for some $A>1$. Then the claim of Theorem~\ref{thm_main} remains valid for 
\[ R\geq C(a) \left( \frac{\cC^2} {M^3} \ee^{CM^4 T } \right)^{\frac1a},
\] 
where $\cC$ is from \eqref{def_of_cC}, with the initial condition on $w(0)$ replaced by $w(0)=u\phi + u_c$ (see \eqref{def_of_r} above).
\end{cor}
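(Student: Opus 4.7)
The plan is to rerun the proof of Theorem~\ref{thm_main} essentially verbatim and audit where the hypotheses $u_0\in C_0^5(B_{R_0})$ and $R\geq R_0+1$ actually enter. Exactly two places use compact support. First, $R\geq R_0+1$ guarantees $\phi\equiv 1$ on $\supp u_0$, which in turn makes $(u\phi)(0)=u_0$ and $u_c(0)=0$, so that the corrected solution inherits the exact initial condition $w(0)=r(0)+v(0)=u_0$. Second, the constant $\cC$ in \eqref{def_of_Cl} contains the term $4R_0^{2a+8}\|u_0\|_{H^5}^2$, which is precisely the trivial bound $\||x|^{a+4}D^l\omega_0\|_2^2\leq R_0^{2a+8}\|D^l\omega_0\|_2^2$ available whenever $\omega_0$ is supported in $B_{R_0}$; this is the sole input into the spatial-decay estimate \eqref{bound_on_Gl} that uses compactness of $\supp u_0$.

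The corollary then follows from two substitutions. The compact-support bound on $\||x|^{a+4}D^l\omega_0\|_2^2$ is replaced by the hypothesis \eqref{assump_on_A_B}, which swaps $4R_0^{2a+8}\|u_0\|_{H^5}^2$ for $A$ inside $\cC$ and recovers \eqref{def_of_cC}. The threshold \eqref{how_large_R}, once one unpacks the implicit constant, reads $R\geq C(a)(\cC^2 e^{CM^4T}/M^3)^{1/a}$, and so becomes the single condition stated in the corollary, with the redundant lower bound $R\geq R_0+1$ dropped since $R_0$ no longer appears in the hypotheses. For the initial condition, we set $w(0)\coloneqq u_0\phi + u_c(0)$; with this choice, the perturbation equation \eqref{NSE_general_form_intro} is again solved with $v(0)=0$, so the construction $w=r+v$ continues to produce a strong solution on $B_R$ satisfying \eqref{main_ests} on $[0,T]$.

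The only obstacle is pure bookkeeping: one must check that no other step in Sections~\ref{sec_spatial_decay}--\ref{sec_NSE_with_forcing} silently relies on compactness of $\supp u_0$ beyond feeding into \eqref{bound_on_Gl}. The Fourier/Caffarelli-Kohn-Nirenberg machinery of Lemma~\ref{lem_fourier}, the Bogovski\u{\i} correction estimates \eqref{bound_on_nabla_uc}--\eqref{decay_of_F2}, the weighted pressure inequalities, and the perturbative theory for \eqref{NSE_general_form_intro} all process $u$ and $\omega$ at positive times through weighted norms that are propagated from $t=0$; once the weighted initial vorticity norm is controlled by $A$, each downstream estimate goes through unchanged with $A$ in place of $4R_0^{2a+8}\|u_0\|_{H^5}^2$, completing the proof.
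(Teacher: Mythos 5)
Your proposal is correct and takes essentially the same route as the paper, which presents the corollary as an immediate byproduct of the proof of Theorem~\ref{thm_main}: compact support of $u_0$ enters only through the bound on the weighted initial vorticity norms $G_l(0)$ in \eqref{bound_on_Gl} (now supplied by \eqref{assump_on_A_B}, turning \eqref{def_of_Cl} into \eqref{def_of_cC}) and through the identification $w(0)=u_0$ (now replaced by $w(0)=u\phi+u_c$, with $v(0)=0$ in \eqref{NSE_general_form_intro}), while the threshold on $R$ is just the unpacked implicit constant in \eqref{how_large_R} with $R\geq R_0+1$ dropped.
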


\section{Proof of the main result}
As sketched above, Theorem~\ref{thm_main} follows from \eqref{to_show_decay1}--\eqref{smallness_of_nabla_v}.  We first introduce some concepts and inequalities.
\subsection{Preliminaries}\label{sec_prelims}
We use standard conventions regarding the Lebesgue spaces $L^p (\Omega )$, Sobolev spaces $W^{k,p}(\Omega )$, $H^k(\Omega )$, $H^1_0(\Omega )$, and we denote by $C_0^\infty (\Omega )$ the space of smooth functions with compact support in $\Omega $. We write $\| \cdot \|_p \equiv \| \cdot \|_{L^p}$. In the case $\Omega = \RR^3$ we omit the domain to simply write $L^p \equiv L^p(\RR^3)$ and $H^k \equiv H^k_0 \equiv H^k (\RR^3)$. 
We denote by $V$ the closure of the set of divergence-free functions $\phi\in C_0^\infty (\Omega )$ in the $H^1$ norm. We denote by $\Omega^c \coloneqq \RR^3 \setminus \Omega $ the complement of a set $\Omega $. We denote by $C\geq 1$ any universal constant that may change value from line to line.

We denote by 
\[
\widehat{f} (\xi  ) \coloneqq \int_{\RR^3} f(x) \ee^{-2\pi i x\cdot \xi } \d x
\]
the Fourier transform of $f$, and by $\cR_j f $, $\widehat{\cR_j f }(\xi ) \coloneqq \cR_j (\xi )\widehat{f}(\xi ) \equiv \frac{\xi_j}{|\xi |} \widehat{f}(\xi )$, the Riesz transform with respect to the $j$-th variable, for $j=1,2,3$. We let $\eta\in C_0^\infty (B_2; [0,1])$ be such that $\eta =1$ on $B_1$, and let $\widetilde{\eta } \coloneqq 1- \eta $. We recall that 
\eqnb\label{riesz_of_outer_cutoff}
\| \Lambda^a ( \cR (\xi ) \widetilde{\eta } (\xi ) ) \|_\infty <\infty
\eqne
for every $a\geq 0$, see \cite[Lemma~2.6]{kt_07} for a proof.

Given $T>0$ and a smooth and bounded domain $\Omega \subset \RR^3$, we consider the Navier-Stokes initial boundary value problem,
\eqnb\label{NSE_general_form}
\begin{split}
\p_t v - \Delta v + (v\cdot \nabla ) v + \nabla \overline{\pi } &=F- (v\cdot \nabla )r - (r\cdot \nabla ) v \qquad \text{ in } \Omega \times (0,T),\\
\div v &=0,\\
\left. v \right|_{\p \Omega} &= 0,\\
v(0) &=v_0,
\end{split}
\eqne
where $v_0 \in C_0^\infty (\Omega )$, $F\in L^\infty ((0,T);L^2)$ and $r\in L^2([0,T]; H^1 )$.

\begin{defn}[Strong solution to \eqref{NSE_general_form}]\label{def_strong_sol}
We say that $v$ is a strong solution to \eqref{NSE_general_form} if $v\in L^\infty ((0,T);V)\cap L^2 ((0,T);H^2 (\Omega ))$ and
\eqnb\label{distr_form_of_eqs}
\int_0^t \int_\Omega \left( -v\cdot \p_t \varphi + \nabla v : \nabla \varphi + \left( (v\cdot \nabla ) v + (v\cdot \nabla ) r + (r\cdot \nabla )v -F\right) \cdot \varphi  \right) = \int_\Omega v_0 \cdot \varphi (0) - \int_\Omega v(t)\cdot \varphi (t)
\eqne
for all divergence-free $\varphi \in C_0^\infty ([0,\infty )\times \Omega ) $ and almost all $s\in (0,T)$.
\end{defn}
We note that for every strong solution $v$ to \eqref{NSE_general_form}, there exists a unique (up to a function of time) pressure function $\overline{\pi }$ (see \cite{svw_86} or \cite[Chapter~5]{NSE_book}). In the case of $\Omega = \RR^3$ 
\eqnb\label{pressure_formula_in_R3}
\overline{\pi }  = \sum_{i,j=1}^3 \cR_i \cR_j (u_j u_i),
\eqne
see \cite[(6.47)]{op} or \cite[Section~5.1]{NSE_book} for details. Moreover,
\eqnb\label{svw_ineq}
\| \nabla \overline{\pi } \|_{L^p ((0,t); L^q (\Omega ) )}  \lec_{p,q} \| F - (v\cdot \nabla )r - (r\cdot \nabla )v  \|_{L^p ((0,t); L^q (\Omega ) )} 
\eqne
for $p,q\in (1,\infty )$, see \cite[Theorem~2.12]{svw_86}. We note that for $\Omega = B_R$ the implicit constant in \eqref{svw_ineq} does not depend on $R$, which can be verified by a scaling argument.

Given $R>0$ we denote by $\P: L^2 (B_R)\to L^2 (B_R)$ the standard Leray projection, i.e. 
\[
\P v \coloneqq v - \nabla \phi,
\]
where $\phi \in H^1_0 (B_R)$ is the unique weak solution of the Poisson equation $\Delta \phi = \div u$ with the homogeneous boundary condition $\left. \phi \right|_{\partial B_R } =0$. Then $\| \P v \|_{L^2 (B_R)} \leq \| v \|_{L^2(B_R)}$. It follows from the uniqueness of weak solutions of the Poisson equation that, if $v_\lambda (x) \coloneqq v (\lambda x)$, then
\eqnb\label{dilation}
(\P v )_\lambda = \P v_\lambda\quad \text{ and }\quad  \| (\P v )_\lambda \|_{L^2 (B_{R/\lambda })} = \lambda^{-3/2}\| \P v \|_{L^2 (B_{R })}.
\eqne
Given $R>0$ we also set
\eqnb\label{stokes_op_def}
D(A) \coloneqq H^1_0 (B_R) \cap H^2 (B_R ),\quad \text{ and } \quad Au\coloneqq \P \Delta u\qquad \text{ for }u\in D(A).
\eqne
Now recall the homogeneous Agmon's inequality
\eqnb\label{agmon}
\| u \|_{L^\infty (B_R)} \leq C \| \nabla u \|_{L^2 (B_R)}^\frac12 \| A u \|_{L^2 (B_R)}^\frac12 
\eqne
for $u\in D(A)$, where $C>0$ is a constant that does not depend on $R$. Indeed, in the case $R=1$ the inequality follows by $\| u \|_{L^\infty } \leq \tilde{C} \| u \|_{H^1}^\frac12 \| u \|_{H^2}^\frac12$ (see, for example, Theorem 1.20 in \cite{NSE_book}) by applying the Poincar\`e inequality to replace $\| u \|_{H^1}$ by $\|\nabla u \|_{L^2}$ and by applying a Stokes estimate to replace the last norm by $\| A u \|$ (see, for example, Proposition~2.2 in Temam \cite{temam_book}). The case of $R\ne 1$ follows by rescaling and observing \eqref{dilation}.
Another application of the Stokes estimate and an observation of the scaling gives that
\eqnb\label{homo_stokes_op}
\|D^2  u \|_{L^2 (B_R) } \sim \| Au \|_{L^2 (B_R)},
\eqne
for all $u\in H^1_0 (B_R)\cap H^2 (B_R)$, where the symbol $\sim$ means ``$\lec$ and $\gtrsim$'', and the implicit constants are independent of $R>0$. In a similar way we obtain
\eqnb\label{interp1}
\| u \|_{L^4 (B_R)} \leq C \|  u \|_{L^2 (B_R)}^\frac14 \| \nabla  u \|_{L^2 (B_R)}^\frac34 
\eqne
for all $u\in H^1_0 (B_R)$, where $C>0$ does not depend on $R$.

We recall the weighted inequality for singular integrals (see \cite{stein_singular_int} or  \cite[(2.15)]{kt_07}),
\eqnb\label{weighted_singular_integral}
\| |x|^a \na u \|_2 \lec_a \| |x|^a \omega \|_2
\eqne
for $a\in [0,3/2)$, as well as the Caffarelli-Kohn-Nirenberg inequality (see \cite{ckn_84}),
\eqnb\label{ckn_ineq}
\| |x|^{a-1} u \|_p \lec_{a,p} \| |x|^a \na u \|_p,
\eqne
where $a\in [0,\infty)$. We will also use the inequality of Gruji\'{c} and Kukavica \cite{gk_03}
\eqnb\label{kukavica_grujic}
\| f \|_1 \leq C \| f \|_2^{\frac12} \| |x|^3 f \|_2^{\frac12},
\eqne
as well as the inequality due to Chae \cite{chae_2002},
\eqnb\label{chae_ineq}
\| \Lambda^a (fg) \|_p \lec_p \| f \|_{p_1} \| \Lambda^a g \|_{p_2} + \|\Lambda^a f \|_{q_1} \|  g \|_{q_2}  ,
\eqne
where $a>0$, $p\in (1,\infty )$ and $p_1,p_2,q_1,q_2\in [1,\infty ]$ are such that $1/p_1+1/p_2 =1/q_1+1/q_2=1/p$. \\ Moreover, for $p\in (1,\infty )$,
\eqnb\label{pressure_decay1_from_igor}
\| |x|^a \pi\|_p \lec_{p,a} \|  |x|^a |u|^2 \|_p
\eqne
for $a\in [0, n/p')$, and 
\eqnb\label{pressure_decay_from_igor}
\|   |x|^a \na \pi \|_p \lec_{p,a} \| |x|^a  | u | \,| \na u | \|_p + \|  |x|^{a-1} |u|^2\|_p
\eqne
for $a\in [0, n/p'+1)$, where the last term can be omitted if $a<n/p'$, see  \cite[Lemma~4.1 and Lemma~4.2]{k_01} for a proof. 

 \subsection{Spatial decay of strong solutions in $\RR^3$}\label{sec_spatial_decay}
In this section we are concerned with the decay properties of strong solutions to \eqref{NSE_intro} on the whole space $\RR^3$, and we prove \eqref{to_show_decay1} and \eqref{decay_of_F1}.

To this end we note that the vorticity $\omega \coloneqq \mathrm{curl} \, u$
satisfies
\[
\p_t \omega - \Delta \omega + (u \cdot \nabla ) \omega - (\omega \cdot \nabla ) u =0.
\]
Since $D^l u$ is bounded in time in any $L^p$ we see that, considering $u$ as given, the vorticity equation is local, which enables us to control any spatial decay of any spatial derivative of $\omega$. To be more precise, we let \[
G_l(t)\coloneqq \| |x|^a D^l \omega \|_2^2,
\]
and we observe that, for each multiindex $\alpha$ with $|\alpha | =l$,
\[
\begin{split}
\frac{\d }{\d t} \left( \int |x|^{2a} |D^\alpha \omega |^2 \right) &= 2 \int |x|^{2a} D^\alpha \omega_j \p_t D^\alpha \omega_j \\
&= 2\int |x|^{2a} D^\alpha \omega_j \Delta D^\alpha \omega_j - 2 \int |x|^{2a} D^\alpha \omega_j D^\alpha \p_k ( u_k \omega_j -\omega_k u_j) \\
&\leq -  \int |x|^{2a} |\na D^\alpha \omega |^2 -4a \int |x|^{2a-2} x_k D^\alpha \omega_j \p_k D^\alpha \omega_j +  \|  u \|_{W^{l+1,\infty}} G_l(t)^\frac12 G_{l+1}(t)^\frac12 \\
&\leq -  \frac12 \int |x|^{2a} |\na D^\alpha \omega |^2 + C\int |x|^{2a-2} | D^\alpha \omega |^2 + \|  u \|_{W^{l+1,\infty}} G_l(t)^\frac12 G_{l+1}(t)^\frac12 \\
&\leq -  \frac12 \int |x|^{2a} |\na D^\alpha \omega |^2 + C\| D^\alpha \omega \|_2^{\frac2a} G_l(t)^\frac{a-1}a  + \|  u \|_{W^{l+1,\infty}} G_l(t)^\frac12 G_{l+1}(t)^\frac12 ,
\end{split}
\]
where, in the third line, we integrated the first term by parts, and, in the fourth line, we noted that $a\leq 10$ and applied the Young inequality $cb\leq \varepsilon c^2 + C_\varepsilon b^2 $ to absorb a part of the second term by the first term. We also applied H\"older's inequality in the last line. Thus, summing in $|\alpha | =l$, for $l=1,\ldots ,4$,
\[
G_l'(t) \lec - G_{l+1}(t) + \| \omega \|_{W^{l,2}}^{\frac2a} G_l (t)^{\frac{a-1}a} + \|  u \|_{W^{l+1,\infty}} G_l(t)^\frac12 G_{l+1}(t)^\frac12  \lec M +  (1 + M ) G_l(t),
\]
where we have also applied the Young inequality. By the Gronwall inequality we obtain that
\eqnb\label{bound_on_Gl}
G_l (t) \lec  \ee^{CMt} (G_l (0 )+ M) \leq \cC
\eqne
for $t\in [0,T]$, $l=0,\ldots 4$, where we have recalled \eqref{def_of_Cl} for the definition of $\cC_l$ and noted that $\| u (t) \|_{H^{l}},\| u (t) \|_{W^{l,\infty}}\leq M_{l}$ for all $t\in [0,T]$ (recall Theorem~\ref{thm_main}). Moreover we used the assumption that $M\geq 1$.

In order to translate this estimate into decay of the velocity field $u$ in $L^p$ for $p\geq 2$, we need the following lemma, which is concerned with homogeneous Fourier multipliers $\M$ of the form $\M = \cR_i \cR_j \p^\beta$, where $\cR_j$ stands for the Riesz transform with respect to the $j$-th variable (recall Section~\ref{sec_prelims}) and $\beta $ is a multiindex. In other words 
\eqnb\label{form_of_m} \widehat{\M f }(\xi ) = m(\xi ) \widehat{f} (\xi)\equiv \frac{\xi_i \xi_j }{|\xi |^2} \xi^\beta \widehat{f}(\xi )\equiv \cR (\xi ) \xi^\beta \widehat{f}(\xi ),
\eqne
where $\beta \in \N^l $ for some $l\in \N$.

\begin{lem}\label{lem_fourier} Suppose that $\int g =0$ and that $\| |x|^a \p^\alpha g \|_2 \lec_a M_{k}$ for every $a\geq 0 $ and every multiindex $\alpha$ with $|\alpha |\leq k$. Then
\[
\| |x|^a \M g \|_p \lec_{a,p} M_{l+3}
\]
for every $p\in [2, \infty )$, $a\in [0,3/p'+l+1)$, where $\M $ is a multiplier of the form \eqref{form_of_m} with $|\beta | =l$.
\end{lem}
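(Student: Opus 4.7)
The plan is to pass to the Fourier side, reducing $\M$ to a bounded-symbol operator after extracting $\xi^\beta$ as a derivative on $g$, decomposing frequencies via the cutoff $\eta$, and exploiting the vanishing moments of the resulting function that are inherited from $\int g = 0$.

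First I would set $h \coloneqq (2\pi \ri)^{-l}\partial^\beta g$, so that $\widehat h(\xi) = \xi^\beta \widehat g(\xi)$ and $\M g = c\,\cR h$ for a numerical constant $c$, where $\cR$ is the bounded multiplier with symbol $\xi_i\xi_j/|\xi|^2$. The hypothesis then transfers to $\||x|^a \partial^\alpha h\|_2 \lec_a M_{l+3}$ for every $|\alpha|\leq 3$ and $a\geq 0$. Integration by parts together with $\int g = 0$ yields $\int x^\gamma h(x)\,\d x = 0$ for all $|\gamma|\leq l$, so $\widehat h$ vanishes to order $l+1$ at the origin. I would then split $\cR h = T_L h + T_H h$ according to the Fourier cutoffs $\eta$ and $\widetilde\eta$.

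For the high-frequency piece and $p=2$, Plancherel gives $\||x|^a T_H h\|_2 = c\|\Lambda^a(\cR\widetilde\eta\,\widehat h)\|_2$ (with $\Lambda^a$ acting in $\xi$), and Chae's inequality \eqref{chae_ineq} yields
\[
\|\Lambda^a(\cR\widetilde\eta\,\widehat h)\|_2 \;\lec\; \|\cR\widetilde\eta\|_\infty \|\Lambda^a \widehat h\|_2 + \|\Lambda^a(\cR\widetilde\eta)\|_\infty \|\widehat h\|_2 \;\lec_a\; \||x|^a h\|_2 + \|h\|_2 \;\lec_a\; M_{l+3},
\]
using $\|\cR\widetilde\eta\|_\infty \leq 1$ and \eqref{riesz_of_outer_cutoff}. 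For $p>2$, the Fourier support of $T_H h$ in $\{|\xi|\geq 1\}$ permits a Sobolev embedding $H^s \hookrightarrow L^p$ with $s = 3/2-3/p \leq 3$, and each derivative $\partial^\alpha(|x|^a T_H h)$ is bounded in $L^2$ by the same Plancherel--Chae argument applied to the weighted norms of $\partial^\alpha h$ for $|\alpha|\leq 3$.

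The low-frequency piece has symbol $\cR\eta\,\widehat h$, supported in $\{|\xi|\leq 2\}$; here the flatness of $\widehat h$ to order $l+1$ at the origin neutralizes the mild non-smoothness of $\cR$ there, leaving a symbol whose regularity matches that of $\widehat h$. Using $\partial^\gamma_\xi \widehat h = (-2\pi \ri)^{|\gamma|}\widehat{x^\gamma h}$ together with the Gruji\'c--Kukavica inequality \eqref{kukavica_grujic}, the $C^k$ norm of this symbol is controlled by $M_{l+3}$; integration by parts then gives rapid spatial decay of $T_L h$ and hence weighted $L^p$ control for any $a$ in the stated range. The principal obstacle is this low-frequency step, where the $l+1$ vanishing moments must be matched precisely against the Muckenhoupt range $a<3/p'$ of the Riesz transform to produce the sharp bound $a<3/p'+l+1$; the constraint $p\geq 2$ is forced by the use of Plancherel in the high-frequency argument.
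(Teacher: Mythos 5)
Your high-frequency treatment is essentially sound and runs parallel to the paper's (Plancherel/Chae plus \eqref{riesz_of_outer_cutoff}, with $\sim 3$ extra derivatives of $g$ accounting for $M_{l+3}$), but the low-frequency step --- which is exactly where the restriction $a<3/p'+l+1$ is generated --- is not actually proved, and the two heuristics you offer for it are mutually inconsistent and, as stated, partly false. The symbol $\cR(\xi)\eta(\xi)\widehat h(\xi)$ does \emph{not} have ``regularity matching that of $\widehat h$'': since $\widehat h$ vanishes only to order $l+1$ at the origin, a derivative $\p_\xi^\gamma$ of $\cR\,\widehat h$ behaves like $|\xi|^{l+1-|\gamma|}$ there, so it is bounded only for $|\gamma|\le l+1$ and locally integrable only for $|\gamma|\le l+3$. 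Consequently integration by parts gives decay of $T_Lh$ of order at most $|x|^{-(l+4)}$ (up to endpoints), not ``rapid spatial decay''; indeed, if the decay were rapid there would be no upper bound on $a$ at all, contradicting the restriction you yourself identify as sharp. The correct bookkeeping is that a decay rate just below $|x|^{-(l+4)}$ yields $\||x|^aT_Lh\|_p<\infty$ precisely for $a<l+4-3/p=3/p'+l+1$, so your outline is repairable, but this endpoint-sensitive computation (including the non-integer smoothness of the symbol needed to reach the full range) is the heart of the lemma and is exactly what you defer as ``the principal obstacle.'' The alternative mechanism you invoke --- matching the $l+1$ vanishing moments against the Muckenhoupt range $a<3/p'$ of the Riesz transform --- is a different argument altogether, and you do not indicate how the moments would be converted into the extra weight $|x|^{l+1}$ inside an $A_p$-weighted bound.

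For comparison, the paper avoids this delicate stationary-phase/decay analysis: by Hausdorff--Young (this is where $p\ge2$, i.e.\ $p'\le2$, enters, rather than through Plancherel as you suggest), $\||x|^a\M g\|_p\le\|\Lambda^a(m\widehat g)\|_{p'}$, and for the inner piece one uses only the single moment $\widehat g(0)=0$ to write $\widehat g(\xi)=\xi\cdot\int_0^1\nabla\widehat g(s\xi)(1-s)\,\d s$; Chae's inequality then reduces everything to $\|\Lambda^a\bigl(m(\xi)\xi_i\eta(\xi)\bigr)\|_{p'}\lec_a1$, which holds exactly because $m(\xi)\xi_i$ is homogeneous of degree $l+1$, so $\Lambda^a$ of it is locally $L^{p'}$ if and only if $a<3/p'+l+1$. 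A smaller issue in your $p>2$ high-frequency argument: Sobolev embedding applied to $|x|^aT_Hh$ requires differentiating the weight, which is singular at the origin for fractional $a$; this is fixable (use $\langle x\rangle^a$, or Hardy's inequality), but should be addressed if you pursue that route instead of the paper's single Hausdorff--Young reduction.
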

\begin{proof}
The proof is inspired by Lemma~2.8 in \cite{kt_07}. Recall (from Section~\ref{sec_prelims}) that $\eta \in C_0^\infty (\RR^3 ; [0,1])$ is such that $\eta =1$ on $B(1)$ and $\eta =0$ outside $B(2)$, and $\tilde{\eta } \coloneqq 1-\eta $.\\

By the Hausdorff-Young inequality (see \cite[Theorem~1]{beckner}),
\eqnb\label{loc_and_nonloc}\begin{split}
\| |x|^a \M g \|_p &\leq \| \Lambda^a (m(\xi) \widehat{g} (\xi ) )\|_{p'}\leq \| \Lambda^a (m(\xi) \eta(\xi )\widehat{g} (\xi ) ) \|_{p'}+\| \Lambda^a (m(\xi) \tilde{\eta}(\xi )\widehat{g} (\xi ) ) \|_{p'}.
\end{split}
\eqne
Using \eqref{chae_ineq} and recalling the form of $m$ \eqref{form_of_m} we can estimate the second term on the right-hand side by a constant multiple of
\[
\| \xi^\beta \widehat{g}(\xi )  \|_{p'} \| \Lambda^a (\cR (\xi ) \tilde{\eta } (\xi )) \|_\infty + \| \cR(\xi ) \tilde{\eta } (\xi ) \|_\infty  \| \Lambda^a ( \xi^\beta  \widehat{g}(\xi ) ) \|_{p'} \lec_m \| \xi^\beta \widehat{g}(\xi )  \|_{p'}+\| \Lambda^a ( \xi^\beta  \widehat{g}(\xi ) ) \|_{p'},
\]
where we used \eqref{riesz_of_outer_cutoff} in the last step. In order to estimate the resulting terms, we first replace $\Lambda^a$ by a classical derivative $\p^\gamma$, for $\gamma \in \N$, and observe that, since $p'\in (1,2]$, we can use Lebesgue interpolation and the inequality \eqref{kukavica_grujic} of Gruji\'{c} and Kukavica to get
\[
\| \p^\gamma_\xi (\xi^\beta \widehat{g}(\xi ) \|_{p'} \lec \sum_{\beta+\gamma'=\beta'+\gamma} \| \xi^{\beta'} \p_\xi^{\gamma'} \widehat{g} (\xi ) \|_{p'} \lec \sum_{\beta+\gamma'=\beta'+\gamma} \| \xi^{\beta'} \p_\xi^{\gamma'} \widehat{g} (\xi ) \|_{2}^{\frac12+\frac1p} \| |\xi|^{3+|\beta'|} \p_\xi^{\gamma'} \widehat{g} (\xi ) \|_{2}^{\frac12-\frac1p}  \lec_a M_{l+3},
\]
where in the last step we used the Plancherel identity and the assumption to note that
\[
\| |\xi |^b \p_\xi^{\gamma'} \widehat{g} (\xi )\|_2 \leq \sup_{|\kappa |=b} \| \p_x^\kappa (x^{\gamma'} g) \|_2 \lec_{\gamma'} M_{b}
\]
for every $b\geq 0$, and every multiindex $\gamma'$. Thus also 
\eqnb\label{interpolation_1}\| \Lambda^a ( \xi^\beta  \widehat{g}(\xi ) ) \|_{p'}\lec_{p,a}   M_{l+3},
\eqne
by interpolation. 

It remains to estimate the first term on the right-hand side of \eqref{loc_and_nonloc},
\[
\| \Lambda^a (m(\xi) \eta(\xi )\widehat{g} (\xi )) \|_{p'}.
\]
 To this end we note that by assumption $\widehat{g}(0)=0$, and so the Fundamental Theorem of Calculus gives
 \[
 \widehat{g}(\xi ) = \xi \cdot \int_0^1  \na_\xi \widehat{g} (s\xi )(1-s) \d s.
 \]
This and \eqref{chae_ineq} gives that
\[\begin{split}
\| \Lambda^a (m(\xi) \eta(\xi )\widehat{g} (\xi ) \|_{p'} &\leq  \| m(\xi ) \xi \eta ( \xi ) \|_{p'} \left\| \Lambda^a \int_0^1 (1-s) \na \widehat{g} (s\xi ) \d s \right\|_{\infty} \\
&\hspace{1cm}+ \| \Lambda^a (m(\xi )\xi  \eta (\xi ))\|_{p'} \left\| \int_0^1 (1-s) \na \widehat{g} (s\xi ) \d s \right\|_\infty\\
&\lec_a  \|  \Lambda^a \na \widehat{g}  \|_{\infty} + \| \na \widehat{g} \|_\infty \\
&\lec  \| |x|^{a+1} g \|_1 + \| |x| g \|_1 \lec_a M_{0},
\end{split}
\] 
 where we used the fact that $a\in [0,3/p'+l+1)$ to deduce that $\| \Lambda^a (m(\xi ) \xi_i \eta (\xi ) \|_{p'}\lec_a 1$ in the second inequality as well as the Gruji\'{c}-Kukavica inequality \eqref{kukavica_grujic} in the last step.
\end{proof}
Noting that $\mathrm{curl}\, \omega =\mathrm{curl}\,\mathrm{curl}\,u= \nabla (\div u ) - \Delta u= -\Delta u$, we obtain  
\[
\p_m u_i = \p_m (-\Delta )^{-1} (\mathrm{curl}\, \omega )_i = \p_m (-\Delta )^{-1} \sum_{j,k=1}^3 \epsilon_{ijk} \p_j \omega_k = \sum_{j,k=1}^3 \epsilon_{ijk} \cR_m \cR_j \omega_k,
\]
where $\epsilon_{ijk}$ denote the Levi-Civita tensor, that is $\epsilon_{ijk}=1$ if $ijk$ is an even permutation of $123$, $-1$ if odd, and $0$ otherwise. Thus we can apply the above lemma with $g\coloneqq \omega_k $, $\mathcal{M} \coloneqq \cR_m cR_j \p^\beta$, where $\beta \in \mathbb{N}^{l-1}$, $j,k,m\in \{ 1,2,3\}$, and obtain 
\eqnb\label{lp_decay_of_derivatives}
 \| |x|^a D^l u \|_p \lec_{p,a} \cC
\eqne
for $t\in [0,T]$ (which we omit in our notation), $p\in [2,\infty )$, $l=1,2$ and $a\in [0, 3/p' +l)$. The case $l=0$ follows by the Caffarelli-Kohn-Nirenberg inequality \eqref{ckn_ineq}.\\
In particular we obtain the first claim in \eqref{to_show_decay1} as
\eqnb\label{ex_of_optimality}
\| \nabla u - \nabla (u\phi ) \|_2 \leq \| (1-\phi ) \nabla u \|_2 + \| \nabla \phi u \|_2  \lec R^{-a} \left( \| |x|^a \nabla u \|_2 +  \| |x|^a u \|  \right) \lec_a \cC R^{-a}
\eqne
for $a\in [0,3/2)$. Another consequence of \eqref{lp_decay_of_derivatives} is that 
\[
\| |x|^a (u\cdot \nabla ) u \|_2 \leq \| |x|^{a_1} u \|_{4} \| |x|^{a_1} \nabla u \|_4 \lec_a \cC^2,  
\]
for $a\in [0,11/2)$, where $a_1\in [0,9/4)$, $a_2\in [0,13/4)$ are such that $a=a_1+a_2$.
Moreover,  \eqref{pressure_decay1_from_igor} gives
\eqnb\label{decay_of_p}
\| |x|^a  \pi \|_p \lec_{p,a} \| | u |^2 |x|^a \|_p  \lec_{p,a}  \cC^2
\eqne
for $p>1$, $a\in [0,3/p')$, while  \eqref{pressure_decay_from_igor} implies
\[
\| |x|^a \nabla \pi \|_p \lec_{p,a} \| | u | \,| \na u |\, |x|^a \|_p + \| |u|^2 |x|^{a-1} \|_p \leq \| u \|_\infty \left( \|  \na u  |x|^a \|_p + \|u |x|^{a-1} \|_p \right) \lec_{p,a}  \cC^2
\]
for $p>1$, $a\in [0,n/p'+1)$. Thus in particular
\[
\| \nabla ((1-\phi ) \pi ) \|_2 \lec \| \pi \|_{L^2(B_{R-1}^c)} + \| \nabla \pi \|_{L^2 (B_{R-1}^c)} \lec_a \cC^2 R^{-a}
\]
for $a\in [0,3/2)$, which gives the second claim in \eqref{to_show_decay1}.

The above estimates also imply \eqref{decay_of_F1}, as
\[\begin{split}
\| F_1 \|_2 &\leq \| |(u\cdot \nabla ) u |+| u |^2  +| p | +| u| + |\na u | \|_{L^2 (B_{R+1}\setminus B_R)}\\
& \leq R^{-a}  \||x|^a ( |(u\cdot \nabla ) u |+| u |^2  +| \pi | +| u| + |\na u | )\|_{2} \lec_a \cC^2 R^{-a}
\end{split}
\]
for every $a<3/2$.

Moreover, we can also use the Navier-Stokes equations \eqref{NSE_intro} to estimate the decay of $u_t$ as
\eqnb\label{decay_of_ut}
\| |x|^a \p_t u \|_2 \leq \| |x|^a \Delta  u \|_2 + \| |x|^a (u\cdot \nabla)u  \|_2 + \| |x|^a \nabla \pi \|_2 \lec_a \cC^2
\eqne
for $a\in [0,5/2)$. 

\subsection{The Bogovski\u{\i}-type correction}\label{sec_bogovskii}
In this section we consider the correction $u_c$ that makes $r \coloneqq u \phi + u_c $ divergence free, recall \eqref{bound_on_nabla_uc}, \eqref{decay_of_F2}. We first recall the Bogovski\u{\i} lemma.

\begin{lemma}[Bogovski\u{\i} lemma]\label{lem_bogovski}
Let $\Omega \subset \RR^3$ be a star-shaped domain with respect to $B_1$ (namely that the line segment $[x,y]$ joining any $x\in \Omega$ with any $y\in B_1$ is contained in $\Omega $). Then given $f\in C_0^\infty (\Omega )$ with $\int f =0$ there exists $v\in C_0^\infty ( \Omega ;\RR^3)$ such that $\mathrm{div}\, v =f$ and 
\[
\| v \|_{W^{k,p}(\Omega )} \lec_{k,p}  \| f \|_{W^{k-1,p} (\Omega )}.
\]
\end{lemma}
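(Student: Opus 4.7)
The plan is to construct $v$ via the classical Bogovski\u{\i} integral operator. Fix once and for all $\omega \in C_0^\infty(B_1;[0,\infty))$ with $\int_{\RR^3} \omega = 1$, and set
\[
v(x) \coloneqq \int_\Omega N(x,y) f(y)\,dy, \qquad N_i(x,y) \coloneqq (x_i - y_i)\int_1^\infty \omega(y + t(x-y))\, t^2 \, dt.
\]
Equivalently, after the change of variables $r = t|x-y|$, $N_i(x,y) = \frac{x_i - y_i}{|x-y|^3}\int_{|x-y|}^\infty \omega(y + r\tfrac{x-y}{|x-y|}) r^2 \, dr$, which reveals that $N$ is, up to lower order terms, a Calder\'on--Zygmund convolution kernel. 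The geometric meaning of the first form is transparent: only points $x$ lying on a segment from some $y\in\supp f$ to $\supp\omega\subset B_1$ contribute, and the star-shapedness of $\Omega$ with respect to $B_1$ ensures that all these segments stay inside $\Omega$.

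First I would verify $\div v = f$ pointwise on $\Omega$. A direct computation based on the identity $\tfrac{d}{dt}\omega(y+t(x-y)) = (x-y)\cdot\nabla\omega(y+t(x-y))$ followed by integration by parts in $t$ shows that $\div_x N(x,y) = \delta_y(x) - \omega(x)$ as a distribution; hence $\div v(x) = f(x) - \omega(x)\int_\Omega f = f(x)$ by the mean-zero hypothesis. Next I would argue that $v \in C_0^\infty(\Omega;\RR^3)$: smoothness follows from differentiating under the integral (the only singular region $x=y$ is harmless because the kernel has integrable singularity $|x-y|^{-2}$ and because the principal part acts smoothly on $C_0^\infty$ inputs by classical theory), while compact support uses star-shapedness, since the first form of $N$ shows $\supp v$ is contained in the union of closed segments joining $\supp f$ to $\overline{B_1}$, a compact subset of $\Omega$.

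The heart of the proof is the $W^{k,p}$-estimate, which I would obtain following Bogovski\u{\i}'s original argument (see also Galdi's presentation in \cite[Section~III.3]{galdi_book}). After the change of variables hinted at above, one splits $N = N_{\mathrm{princ}} + N_{\mathrm{rem}}$ where $N_{\mathrm{princ}}$ is a classical Calder\'on--Zygmund convolution kernel of the form $K(x-y)$ with the required homogeneity and mean-zero property on the unit sphere, and $N_{\mathrm{rem}}$ has the weaker pointwise bound $|N_{\mathrm{rem}}(x,y)|\lesssim |x-y|^{-2}$, with compact $x$-support. Standard Calder\'on--Zygmund theory supplies the $L^p\to L^p$ bound for $N_{\mathrm{princ}}$, and $N_{\mathrm{rem}}$ is handled by Young's inequality against a weakly singular, compactly supported kernel. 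For the full $W^{k,p}$-statement, one differentiates $k$ times in $x$, uses the scaling in the parameter $t$ to convert $x$-derivatives into $y$-derivatives modulo lower-order terms, and integrates by parts in $y$ to transfer these derivatives onto $f$; the boundary terms vanish because $f\in C_0^\infty(\Omega)$, producing $\|f\|_{W^{k-1,p}(\Omega)}$ on the right-hand side.

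The main obstacle is the precise Calder\'on--Zygmund analysis of $N_{\mathrm{princ}}$: one must verify the cancellation of its kernel over the unit sphere and track the size constants carefully so that they depend on $\Omega$ only through the choice of $\omega$ (equivalently, through the inradius, here normalized to $1$). Beyond this, the bookkeeping required to commute $x$-derivatives with the integral operator and re-express them as $y$-derivatives on $f$ is somewhat intricate but essentially algebraic.
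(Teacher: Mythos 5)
Your construction is exactly the one the paper records: your kernel $N$ is, after the change of variables you indicate, precisely the formula \eqref{bogovskii_formula} with $h=\omega$, and the paper itself offers no proof beyond this formula and the citations to Bogovski\u{\i} and to \cite[Section~III.3]{galdi_book}, which is the same classical argument you sketch (divergence identity via the mean-zero condition, support from star-shapedness, Calder\'on--Zygmund estimates for the $W^{k,p}$ bound). The only inaccuracy worth flagging is that the principal part of $\partial_x N$ is not a convolution kernel $K(x-y)$: it has the form $K(x,x-y)$ with $K(x,\cdot)$ homogeneous of degree $-3$ and mean-zero on spheres (the coefficient involves $\int_0^\infty \omega(x+s\,z/|z|)s^2\,\d s$ and so genuinely depends on $x$), so one must invoke the Calder\'on--Zygmund theorem for variable kernels, as is done in the sources you cite; with that standard adjustment your outline matches the classical proof.
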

The Bogovski\u{\i} lemma is a well-known result (see \cite{bogovskii_79,bogovskii_80} or \cite[Lemma~III.3.1]{galdi_book}, for example). In fact, letting  $h\in C_0^\infty (B_1)$ be such that $\int h=1$, the vector field
\eqnb\label{bogovskii_formula}
v(x) \coloneqq \int_\Omega f(y) \left( \frac{x-y}{|x-y|^3} \int_{|x-y|}^\infty h\left( y + z \frac{x-y}{|x-y|} \right) z^2 \d z \right) \d y 
\eqne
satisfies the claim of the lemma.

We note that our domain, $B_{R}\setminus B_{R-1}$, is not star-shaped, and so we need to decompose the domain as well as $f=\nabla \phi \cdot u$ into a number of pieces that would allow us to construct the correction $u_c$. Some decompositions of this form can be found in \cite[Lemma~III.3.2 and Lemma~III.3.4]{galdi_book}, where one of the main difficulties is to guarantee that each of the pieces still have compact support as well as vanishing integral. In our case, this issue simplifies, as $f=\div (u \phi)$, and so this divergence structure allows us to apply a partition of unity inside ``div''. To be more precise, we let $G_1,\ldots , G_L\subset B_R \setminus B_{R-1}$ be open balls, and $\psi_l \in C_0^\infty (G_l;[0,1])$ ($l=1,\ldots , L$) be such that $\| \psi_l \|_{W^{2,\infty }} \leq C$ for some universal constant $C>0$ and $\psi_1+\ldots \psi_L=1$ on $\mathrm{supp}\,\phi$. We can assume that each $G_l$ intersects at most $10$ other $G_l$'s. 

Note that $\div (\psi_l \phi u)$ is compactly supported and has vanishing mean for each $l=1,\ldots , L$ and so we can use Lemma~\ref{lem_bogovski} (i.e. by \eqref{bogovskii_formula} with $\Omega \coloneqq G_l$, $f\coloneqq \div ( \psi_l \phi u )$) to obtain $v_l \in C_0^\infty (G_l)$ such that $\| v_l \|_{W^{k,p} }\lec \| \psi_l \phi u \|_{W^{k,p}}$ for all $k\geq 0$, $p\in (1,\infty)$, where the implicit constant can be chosen independent of $l,R$. 

This gives that
\[
u_c \coloneqq \sum_{l=1}^L v_l
\]
belongs to $C_0^\infty (B_R \setminus B_{R-1})$ and
\[
\| u_c \|_{W^{k,p}}^p \lec \sum_{l=1}^L  \| v_l \|_{W^{k,p} }^p \lec \sum_{l=1}^L  \| \psi_l \phi u  \|_{W^{k,p}}^p \lec_p \| u \|_{W^{k,p} (B_R \setminus B_{R-1} ) }^p 
\] 
for $p\in (1,\infty )$. Thus 
\[
\begin{split}
\| u_c \|_{W^{1,p}}&\lec \| u \|_{W^{1,p} (B_R \setminus B_{R-1})} \lec_{a,p} \cC R^{-a} \qquad \text{ for } a\in [0,3/p'), p\in (1,\infty )\\
\| \Delta u_c \|_2 &\lec  \| u \|_{H^2 (B_R \setminus B_{R-1})} \lec_a \cC R^{-a} \qquad \text{ for } a\in [0,3/2),\\
\text{ and }\| \p_t u_c \|_{2}&\lec  \| \p_t u \|_{L^2 (B_R \setminus B_{R-1})} \lec_a \cC^2 R^{-a}\qquad \text{ for } a\in [0,5/2),
\end{split}
\]
where we used \eqref{lp_decay_of_derivatives}, \eqref{decay_of_ut} and the last inequality follows from the form of \eqref{bogovskii_formula}, which allows differentiation inside the integral. 

This and the embedding $H^2 \subset L^\infty$ gives \eqref{bound_on_nabla_uc}. Moreover
\[ \begin{split}
\| F_2 \|_2 &= \| (u_c \cdot \na )(u\phi ) + (u\phi  \cdot \na )v_c + (u_c \cdot \na ) u_c - \Delta u_c +\p_t u_c \|_2 \\
&\lec\| u_c \|_{W^{1,4}} \| u \|_{W^{1,4} (B_{R-1}^c)} +\| u_c \|_{W^{1,4}}^2 + \| \Delta u_c \|_2 + \| \p_t u_c \|_2 \\
&\lec_a \cC^2 R^{-a}
\end{split}\]
for $a\in [0,3/2)$, where we also used \eqref{lp_decay_of_derivatives}. This gives \eqref{decay_of_F2}, as required. 

\subsection{The Navier-Stokes equations on $B_R$ with small forcing}\label{sec_NSE_with_forcing}

In this section we discuss well-posedness of \eqref{NSE_general_form} with small small forcing $F$, and we prove \eqref{smallness_of_nabla_v}. 
\begin{lemma}[Strong solution to \eqref{NSE_general_form} for small forcing]\label{lem_solving_nse_with_forcing}
Let $T>0$ and $r : [0,T]\to D(A)$, and suppose that there exists $N\geq 1$ such that $\| \nabla  r (t) \|_{L^2(B_R)}+\| \nabla  r (t) \|_{L^4(B_R)}+\| D^2  r (t) \|_{L^2(B_R)} \leq N$ for all $t\in [0,T]$. Then there exists a unique strong solution $v$ to \eqref{NSE_general_form} above if $\| F  \|_{L^\infty ((0,T);L^2 (B_R))} \leq \varepsilon $ for some
\[ \varepsilon \in \left( 0 , C N^{3} \ee^{-2N^4C^2T}\right) , \]
where $C>1$ is a universal constant. Moreover,
\eqnb\label{strong_sol_smallness_toshow}
\| \na v (t) \|_{L^2 (B_R)} \leq \frac{\varepsilon }{N^2C } \left( \ee^{4N^4 C^2t }-1 \right)^\frac12 , \qquad \| \nabla \overline{\pi } \|_{L^p ((0,t); L^2 (B_R)} \lec_p  t^{\frac1p} N \varepsilon \ee^{2N^4C^2t }  
\eqne
for all $t\in [0,T]$ and $p\in (1,\infty )$.
\end{lemma}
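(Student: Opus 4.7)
The plan is to solve \eqref{NSE_general_form} by the Galerkin scheme built from the eigenfunctions of the Stokes operator $A$ on $B_R$, and to extract a uniform $H^1$ a priori bound by testing against $Av$. The claimed estimates \eqref{strong_sol_smallness_toshow} force $v(0)=0$ (which is what is needed in the application in the proof of Theorem~\ref{thm_main}), so I write the argument under that assumption. Short-time existence of the approximants is classical; the essence of the lemma is a single energy-type estimate which, provided $\|F\|_{L^\infty((0,T);L^2)}$ is sufficiently small, keeps $\|\nabla v(t)\|_{L^2}$ below a fixed threshold on all of $[0,T]$.

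For the key estimate I take the $L^2(B_R)$ inner product of \eqref{NSE_general_form} with $Av$. Since $Av$ is solenoidal the pressure term vanishes, and using $\langle\partial_t v,Av\rangle=\tfrac12\tfrac{d}{dt}\|\nabla v\|_{L^2}^2$ together with $\langle-\Delta v,Av\rangle=\|Av\|_{L^2}^2$ gives
\[
\tfrac12\tfrac{d}{dt}\|\nabla v\|_{L^2}^2 + \|Av\|_{L^2}^2 = \int_{B_R}\bigl(F-(v\cdot\nabla)v-(v\cdot\nabla)r-(r\cdot\nabla)v\bigr)\cdot Av.
\]
The nonlinear terms are controlled with the Agmon inequality \eqref{agmon}, the equivalence $\|D^2 v\|_{L^2}\sim\|Av\|_{L^2}$ from \eqref{homo_stokes_op}, and the bound $\|r\|_\infty\lec\|\nabla r\|_{L^2}^{1/2}\|Ar\|_{L^2}^{1/2}\lec N$: for example $|\int(v\cdot\nabla)v\cdot Av|\lec\|\nabla v\|_{L^2}^{3/2}\|Av\|_{L^2}^{3/2}$, while the drift terms are bounded by $\lec N\|\nabla v\|_{L^2}^{1/2}\|Av\|_{L^2}^{3/2}+N\|\nabla v\|_{L^2}\|Av\|_{L^2}$. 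Absorbing $\|Av\|_{L^2}^2$ into the left via Young's inequality and setting $y(t):=\|\nabla v(t)\|_{L^2}^2$ yields
\[
y'(t) + \tfrac12\|Av\|_{L^2}^2 \leq C\|F\|_{L^2}^2 + Cy(t)^3 + CN^4 y(t).
\]

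With $y(0)=0$ I close the estimate by a continuity/bootstrap argument: as long as $y\leq N^2$ the cubic satisfies $y^3\leq N^4 y$, hence $y'\leq C\varepsilon^2+2CN^4 y$, and Gronwall gives $y(t)\leq\tfrac{\varepsilon^2}{2N^4}\bigl(e^{2CN^4 t}-1\bigr)$. The smallness requirement $\varepsilon\lec N^3 e^{-N^4 C^2 T}$ is chosen precisely to preserve the bootstrap hypothesis $y\leq N^2$ on all of $[0,T]$, which delivers the first bound of \eqref{strong_sol_smallness_toshow}; the same a priori estimate provides a uniform $L^2$-in-time bound on $\|Av\|_{L^2}$, so Aubin--Lions compactness allows passage to the limit in the Galerkin scheme and produces a strong solution in the sense of Definition~\ref{def_strong_sol}. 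Uniqueness is standard: the difference $w$ of two solutions satisfies a linear equation whose energy estimate closes by Gronwall using the just-established $H^1$ control. The pressure bound follows from \eqref{svw_ineq} by estimating $\|(v\cdot\nabla)r\|_{L^2}+\|(r\cdot\nabla)v\|_{L^2}\lec N\|\nabla v\|_{L^2}$ (using $H^1_0\hookrightarrow L^6$ and $\|\nabla r\|_{L^3}\lec\|\nabla r\|_{L^2}^{1/2}\|\nabla r\|_{L^6}^{1/2}\lec N$) and inserting the first bound of \eqref{strong_sol_smallness_toshow}. The main technical obstacle is the supercritical cubic $y^3$ generated by $(v\cdot\nabla)v$, which cannot be absorbed into the dissipation without already assuming smallness of $y$; this is precisely what forces the exponential-in-$T$ and polynomial-in-$N$ smallness condition on $\varepsilon$.
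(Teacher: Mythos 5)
Your proposal is correct and follows essentially the same route as the paper: a Galerkin scheme built on the Stokes eigenfunctions, the key a priori bound obtained by testing with $Av$ (yielding the same inequality $y'+\tfrac12\|Av\|^2\lec \varepsilon^2+y^3+N^4y$), smallness of $\varepsilon$ to control the cubic term, Aubin--Lions to pass to the limit, and \eqref{svw_ineq} for the pressure. The only differences are cosmetic: the paper closes the differential inequality by comparison with the explicit supersolution $g(t)=\tfrac{\varepsilon^2}{N^4C^2}(\ee^{4N^4C^2t}-1)$ rather than your continuity/bootstrap argument, and it estimates $\|(v\cdot\nabla)r\|_2$ via $\|v\|_4\|\nabla r\|_4$ (using the separate $L^2$ energy bound) instead of your $L^6$--$L^3$ pairing.
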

Recall \eqref{stokes_op_def} for the definition of the Stokes operator $A$, and note that the assumption on $r$ implies that
\eqnb\label{to_use_r} 
\| r (t) \|_{L^\infty (B_R)} \leq C \| \nabla r \|_{L^2 (B_R)}^{\frac12} \| D^2 r \|_{L^2 (B_R)}^{\frac12} ,
\eqne
due to \eqref{agmon} and \eqref{homo_stokes_op}.

We note that taking $\varepsilon \coloneqq C(a) \cC^2 R^{-a}$, and $R$ as in \eqref{how_large_R}, the lemma implies \eqref{smallness_of_nabla_v}, as required.
 
The lemma can be proved using a standard Galerkin procedure, and we provide a sketch of the proof (inspired by \cite[Theorem~4.4 and Theorem~6.8]{NSE_book}) to keep track of the quantitative estimates. 
\begin{proof}
We first note that uniqueness follows in the same way as uniqueness of local-in-time strong solutions to the homogeneous Navier-Stokes equations (see Theorem 6.10 in \cite{NSE_book}, for example).

For existence, let $\mathcal{N} \coloneqq \mathrm{span} \,\{ a_1, \ldots , a_n \}$ denote the linear space spanned by the first $n$ eigenvalues of the Stokes operator $A$ (recall \eqref{stokes_op_def}) on $B_R$, that is for all $k$ $a_k\in D(A)$ and $A a_k = \lambda_k a_k$ for some $\lambda_k >0$ such that $0 < \lambda_k \leq \lambda_{k+1}$. 

We first show that for each $n$ there exist $c_1, c_2, \ldots , c_n \in C^1 ([0,T])$ such that
\[
w \coloneqq \sum_{k=1}^n c_k (t) a_k \in \mathcal{N}
\]
is a weak solution of the Galerkin approximation of 
\eqnb\label{galerkin_for_a}
\begin{split}
\p_t w + Aw + P_n \left( (w\cdot \nabla ) w \right) &=P_n F -P_n \left( (w\cdot \nabla ) v + (v\cdot \nabla ) w \right) \\
\div w &=0,\\
w(0) &= 0,
\end{split}
\eqne
where $P_n\colon L^2 \to \mathcal{N} \subset L^2$ is the orthogonal projection onto $\mathcal{N}$, i.e. that $w\in L^\infty ((0,T);L^2)\cap L^2 ((0,T);H^1)$ satisfies \eqref{distr_form_of_eqs} for $\phi \in \mathcal{N}$.

Indeed taking the inner product of the above equation with $a_k$ ($k=1,\ldots ,n$) we have
\[
c_k' + \sum_{j=1}^n c_j \int A a_ja_k + \sum_{i,j =1}^n c_i c_j \int (a_i \cdot \nabla )a_j\cdot  a_k = - \sum_{j=1}^n c_j \int \left( (a_j \cdot \nabla )r\cdot  a_k + (r\cdot \nabla ) a_j\cdot  a_k \right) + \int F \cdot a_k,
\]
and so using the facts that $A a_j = \lambda_j a_j$, that $a_j$'s are orthonormal in $L^2$ (see \cite[Theorem 2.24]{NSE_book}) and setting
\[
B_{ij}^{(k)} \coloneqq \int (a_i \cdot \nabla )a_j \cdot a_k,\qquad D_j^{(k)}\coloneqq \int \left( (a_j \cdot \nabla )r\cdot  a_k + (r\cdot \nabla ) a_j \cdot a_k \right), \qquad C^{(k)} \coloneqq \int F\cdot  a_k,
\]
we obtain a system of $n$ differential equations for $c_1, \ldots , c_n$,
\[
c_k'=- \sum_{i,j=1}^n c_i c_j B_{ij}^{(k)} - \sum_{j=1}^n (c_j D_j^{(k)} + \lambda_k )  + C^{(k)},
\]
with initial conditions $c_k(0) =0$ for $k=1,\ldots , n$. Since the right-hand side is locally Lipschitz, we obtain local in time well-posedness of the system (see Hartman \cite{hartman}). That the $c_k$'s exist for all times can be observed by testing \eqref{galerkin_for_a} by $w\in \mathcal{N}$, which gives that
\[
\frac12 \frac{\d }{\d t} \| w \|^2 +\| \na w \|^2 = -\int ( (w\cdot \nabla )r )\cdot w -\int F w\leq \frac12 \| \na w \|^2 + c   \| w \|^2 (1 + \| r \|^2_\infty ) + \frac12 \| F \|^2,
\]
where we used the cancellations $\int ((w\cdot \nabla ) w)\cdot w = \int ((r \cdot \nabla )w )\cdot w =0$, as well as integrated the term $\int ( (w\cdot \nabla )r )\cdot w$ by parts and applied Young's inequality. For brevity, we also used the notation $\| \cdot \| \equiv \| \cdot \|_{L^2 (B_R)}$ and $\| \cdot \|_p\equiv \| \cdot \|_{L^p (B_R)}$, which we continue for the rest of the proof.

The Gronwall inequality gives that 
\eqnb\label{est_on_w_L2} \sum_{k=1}^n c_k^2 = \| w \|^2 \leq \int_0^t \| F (s) \|^2  \ee^{cN^2 (t-s)} \d s \leq \varepsilon^2 \ee^{cN^2 t}  \eqne
for $t\geq 0$, which shows global existence of $c_k$'s, and also implies that $\int_0^T \| \nabla w \|^2 \lec \int_0^T \| F (t) \|^2 \d t + N^2 \int_0^T \| F (t) \|^2 \ee^{cN^2t} \d t <\infty $. 

Moreover $w$ is bounded in $L^\infty ((0,T);V)$ and in $L^2 ((0,T); H^2)$, uniformly in $n$. Indeed, multiplying the equation by $Aw$ we obtain
\[\begin{split}
\frac12 \frac{\d }{\d t } \| \na w \|^2 + \| Aw \|^2 &= \int\left( ((w\cdot \nabla ) w )Aw - \PP F Aw -  ((w\cdot \nabla )r) Aw - ((r\cdot \nabla ) w Aw \right) \\
& \leq \| w \|_\infty \| \nabla w \| \| Aw \|+ \| F \| \| Aw\|+ \| w \|_\infty \| \nabla r \| \| Aw \| + \| r \|_\infty \| \nabla w \| \| Aw \| \\
&\lec  \| \nabla w \|^{\frac32}  \| Aw \|^{\frac32} + \| F \| \| Aw \| + \| \nabla r \| \| \nabla w \|^{\frac12} \| Aw \|^{\frac32}  + \| r \|_\infty \| \nabla w \| \| Aw \| ,
\end{split}
\]
where we used \eqref{agmon} in the third inequality. Thus using Young's inequality we can absorb $\| Aw \|^2$ on the left-hand side to obtain
\[\begin{split}
 \frac{\d }{\d t } \| \na w \|^2 + \| A w \|^2 &\leq C^2 \| \na w \|^6  + C^2 \| \na w \|^2 ( \| \na r \|^4 + \| r \|_\infty^2 ) + \| F \|^2\\
 &\leq C^2 \| \na w \|^6  + C^2 N^4 \| \na w \|^2  + \varepsilon^2
\end{split}
\]
for some $C>\max\{ 1,c\}$, where $c$ is from \eqref{est_on_w_L2}. Thus, since 
\[ g(t) \coloneqq  \frac{\varepsilon^2 }{N^4C^2 } \left( \ee^{4N^4C^2t }-1 \right) 
\]
satisfies 
\[
g'(t) \geq C^2 g(t)^3  +  C^2N^4 \,g(t) + \varepsilon^2
\]
for $t\in [0,T]$, we have that 
\eqnb\label{est_on_na_w_L2}
\| \na w \|^2 \leq g(t) \qquad \text{ for }t\in [0,T],
\eqne
which also implies that $\| D^2 w \|_{L^2 ((0,T)\times B_R )}^2 \lec \| A w \|_{L^2 ((0,T)\times B_R )}^2 \leq g(T) - g(0) =  {\varepsilon }{(NC)^{-1} } \left( \ee^{4NCT }-1 \right) $, where we also used \eqref{homo_stokes_op}.\\

Finally \eqref{galerkin_for_a} shows that $\| \p_t w \|_{L^{\frac43}((0,T);V^*)} $ is bounded uniformly in $n$ (recall that $w\equiv w_n $ is the Galerkin approximation for given $n$), which can be shown by a standard argument, using H\"older's inequality, Lebesgue interpolation and Sobolev embedding $H^1_0\subset L^6$, see for example \cite[Theorem 4.4, Step 3]{NSE_book}.

This estimate on the time derivative lets us use the Aubin-Lions lemma (see \cite{aubin,lions} or \cite[Theorem~2.1 in Chapter~III]{temam_book}) to extract a subsequence $\{ w_{n_k} \}$ such that 
\[
\begin{split}
w_{n_k} \to& v \qquad \text{ in } L^3 ((0,T)\times B_R),\\
w_{n_k} \stackrel{*}{\rightharpoonup} &v \qquad \text{ in } L^\infty ((0,T); V),\\
D^2 w_{n_k } \rightharpoonup &D^2 v \qquad \text{ in } L^2 ((0,T)\times B_R)
\end{split}
\]
for some $v\in L^\infty ((0,T);V)\cap L^2 ((0,T);H^2)$.
This mode of convergence enables us to take the limit in the weak formulation of the equation for $w_{n_k}$, and so shows that $v$ is the required solution. 

The estimate for $\nabla v$ in \eqref{strong_sol_smallness_toshow} follows from \eqref{est_on_na_w_L2}. As for the estimate for $\nabla \overline{\pi }$ in \eqref{strong_sol_smallness_toshow} we have
\[\begin{split}
\| (v\cdot \nabla ) r + (r\cdot \nabla ) v \|_2 &\leq \| v \|_4 \| \nabla r \|_4 + \| r\|_{\infty } \| \nabla v \|_2 \lec N\left( \| v \|_2^{\frac14} \| \nabla v \|_2^{\frac34}  + \| \nabla v \|_2 \right) \lec N \varepsilon \ee^{2N^4C^2t }
\end{split}
\]
at each time, where we used \eqref{to_use_r} and \eqref{interp1} in the second inequality, as well as \eqref{est_on_w_L2} and \eqref{est_on_na_w_L2} in the last. Thus  \eqref{svw_ineq} gives
\[\begin{split}
\| \nabla \overline{\pi } \|_{L^p ((0,t); L^2 (B_R)} &\lec_p  \| F- (v\cdot \nabla )r - (r\cdot \nabla )v \|_{L^p ((0,t); L^2 (B_R))} \lec t^{\frac1p} N \varepsilon \ee^{2N^4C^2t }  
\end{split}
\]
for every $p\in (1,\infty )$, as required.
\end{proof}

\section*{Acknowledgement} The author has been partially supported by the Simons Foundation. 

\bibliography{literature}{}
\end{document}